 \def\G{{\mathcal G}}
 \def\opn#1#2{\def#1{\operatorname{#2}}} 
 \opn\chara{char} \opn\length{\ell} \opn\pd{pd} \opn\rk{rk}
 \opn\projdim{proj\,dim} \opn\injdim{inj\,dim} \opn\rank{rank}
 \opn\depth{depth} \opn\grade{grade} \opn\height{height}
 \opn\embdim{emb\,dim} \opn\codim{codim}
 \opn\Tr{Tr} \opn\bigrank{big\,rank}
 \opn\superheight{superheight}\opn\lcm{lcm}
 \opn\trdeg{tr\,deg}
 \opn\reg{reg} \opn\lreg{lreg} \opn\ini{in} \opn\lpd{lpd}
 \opn\size{size} \opn\sdepth{sdepth}
 \opn\link{link}\opn\fdepth{fdepth}\opn\lex{lex}
 \opn\div{div} \opn\Div{Div} \opn\cl{cl} \opn\Cl{Cl}
 \opn\Spec{Spec} \opn\Supp{Supp} \opn\supp{supp} \opn\Sing{Sing}
 \opn\Ass{Ass} \opn\Min{Min}\opn\Mon{Mon}
 \opn\Ann{Ann} \opn\Rad{Rad} \opn\Soc{Soc}
 \opn\Im{Im} \opn\Ker{Ker} \opn\Coker{Coker} \opn\Am{Am}
 \opn\Hom{Hom} \opn\Tor{Tor} \opn\Ext{Ext} \opn\End{End}
 \opn\Aut{Aut} \opn\id{id}
 \opn\nat{nat}
 \opn\pff{pf}
 \opn\Pf{Pf} \opn\GL{GL} \opn\SL{SL} \opn\mod{mod} \opn\ord{ord}
 \opn\Gin{Gin} \opn\Hilb{Hilb}\opn\sort{sort}
 \opn\aff{aff} \opn
\opn\relint{relint} \opn\st{st}
 \opn\lk{lk} \opn\cn{cn} \opn\core{core} \opn\vol{vol}  \opn\inp{inp} \opn\nilpot{nilpot}
 \opn\link{link} \opn\star{star}\opn\lex{lex}\opn\set{set}
 \opn\width{wd}
 \opn\ecart{ecart}
 \opn\gr{gr}
 \def\pot#1#2{#1[\kern-0.28ex[#2]\kern-0.28ex]}
 \opn\dirlim{\underrightarrow{\lim}}
 \opn\inivlim{\underleftarrow{\lim}}
 \let\to=\rightarrow
 \def\Implies{\ifmmode\Longrightarrow \else
         \unskip${}\Longrightarrow{}$\ignorespaces\fi}
 \def\implies{\ifmmode\Rightarrow \else
         \unskip${}\Rightarrow{}$\ignorespaces\fi}
 \def\iff{\ifmmode\Longleftrightarrow \else
         \unskip${}\Longleftrightarrow{}$\ignorespaces\fi}
 \def\Soc{{\mathbf Soc}}
 \def\opn#1#2{\def#1{\operatorname{#2}}} 
 \opn\chara{char} \opn\length{\ell} \opn\pd{pd} \opn\rk{rk}
 \opn\projdim{proj\,dim} \opn\injdim{inj\,dim} \opn\rank{rank}
 \opn\depth{depth} \opn\grade{grade} \opn\height{height}
 \opn\bigheight{bigheight}
 \opn\embdim{emb\,dim} \opn\codim{codim}
 \opn\superheight{superheight}\opn\lcm{lcm}
 \opn\trdeg{tr\,deg}
 \opn\reg{reg} \opn\lreg{lreg} \opn\ini{in} \opn\lpd{lpd}
 \opn\size{size} \opn\sdepth{sdepth}
 \opn\link{link}\opn\fdepth{fdepth}\opn\lex{lex}
 \opn\type{type}
 \opn\gap{gap}
 \opn\arithdeg{arith-deg}
 \opn\Deg{Deg}
 \opn\sat{sat}
 \opn\mat{mat}
 \opn\Mat{Mat}
 \opn\div{div} \opn\Div{Div} \opn\cl{cl} \opn\Cl{Cl}
 \opn\Spec{Spec} \opn\Supp{Supp} \opn\supp{supp} \opn\Sing{Sing}
 \opn\Ass{Ass} \opn\Min{Min}\opn\Mon{Mon} \opn\Max{Max}
 \opn\Ann{Ann} \opn\Rad{Rad} \opn\Soc{Soc}
 \opn\Im{Im} \opn\Ker{Ker} \opn\Coker{Coker} \opn\Am{Am}
 \opn\Hom{Hom} \opn\Tor{Tor} \opn\Ext{Ext} \opn\End{End}
 \opn\Aut{Aut} \opn\id{id}
 \opn\nat{nat}
 \opn\pff{pf}
 \opn\Pf{Pf} \opn\GL{GL} \opn\SL{SL} \opn\mod{mod} \opn\ord{ord}
 \opn\Gin{Gin} \opn\Hilb{Hilb}\opn\sort{sort}
 \opn\PF{PF}\opn\Ap{Ap}
 \opn\mult{mult}
 \opn\bight{bight}
 \opn\aff{aff}
 \opn\relint{relint} \opn\st{st}
 \opn\lk{lk} \opn\cn{cn} \opn\core{core} \opn\vol{vol}  \opn\inp{inp} \opn\nilpot{nilpot}
 \opn\link{link} \opn\star{star}\opn\lex{lex}\opn\set{set}
 \opn\width{wd}
 \opn\Fr{F}
 \opn\QF{QF}
 \opn\G{G}
 \opn\type{type}\opn\res{res}
 \opn\conv{conv}
 \opn\Shad{Shad}
 \opn\gr{gr}
 \def\pot#1#2{#1[\kern-0.28ex[#2]\kern-0.28ex]}
 \opn\dirlim{\underrightarrow{\lim}}
 \opn\inivlim{\underleftarrow{\lim}}
 \let\to=\rightarrow
 \def\Implies{\ifmmode\Longrightarrow \else
         \unskip${}\Longrightarrow{}$\ignorespaces\fi}
 \def\implies{\ifmmode\Rightarrow \else
         \unskip${}\Rightarrow{}$\ignorespaces\fi}
 \def\iff{\ifmmode\Longleftrightarrow \else
         \unskip${}\Longleftrightarrow{}$\ignorespaces\fi}
\theoremstyle{plain}
\newtheorem{theorem}{Theorem}[section]
\newtheorem{corollary}[theorem]{Corollary}
\newtheorem{lemma}[theorem]{Lemma}
\theoremstyle{definition}
\newtheorem{example}[theorem]{Example}
\newtheorem{ex}[theorem]{Example}
\newtheorem{remark}[theorem]{Remark}
 \let\epsilon\varepsilon
 \let\kappa=\varkappa
 \def\qed{\ifhmode\textqed\fi
       \ifmmode\ifinner\quad\qedsymbol\else\dispqed\fi\fi}
 \def\textqed{\unskip\nobreak\penalty50
        \hskip2em\hbox{}\nobreak\hfil\qedsymbol
        \parfillskip=0pt \finalhyphendemerits=0}
 \def\dispqed{\rlap{\qquad\qedsymbol}}
 \opn\dis{dis}
 \def\pnt{{\raise0.5mm\hbox{\large\bf.}}}
 \opn\Lex{Lex}
\newcommand{\rme}{\mathrm{e}}
\newcommand{\rmf}{\mathrm{f}}
\newcommand{\fka}{\mathfrak{a}}
\newcommand{\fkb}{\mathfrak{b}}
\newcommand{\fkm}{\mathfrak{m}}
\newcommand{\m}{\mathfrak{m}}
\title{The irreducible multiplicity and Ulrich modules}
\author{Tran Nguyen An}
\address{Tran Nguyen An: Thai Nguyen University of Education, Vietnam}
\email{antrannguyen@gmail.com}
\author{Shinya Kumashiro}
\address{Shinya Kumashiro: National Institute of Technology (KOSEN), Oyama College
771 Nakakuki, Oyama, Tochigi, 323-0806, Japan}
\email{skumashiro@oyama-ct.ac.jp}
\thanks{2020 {\em Mathematics Subject Classification.} 13A15, 13H10}
\thanks{{\em Key words and phrases.} reducibility index, Hilbert multiplicity, irreducible multiplicity, Ulrich module}
\thanks{This first author was partially supported by Vietnam National Foundation for Science
and Technology Development (NAFOSTED) under grant number 101.04-2019.309.
The second author was supported by JSPS KAKENHI Grant Number 21K13766.}
\begin{document}

\begin{abstract}
In this paper, we give a relation between the Hilbert multiplicity and the irreducible multiplicity. As an application, we characterize Ulrich modules in term of the irreducible multiplicity.
\end{abstract}

\maketitle


\section{Introduction}\label{section1}

The purpose of this paper is to study the irreducible multiplicity in connection with the (Hilbert) multiplicity. Let $(R, \m)$ be a Noetherian local ring, $I$ an $\m$-primary ideal, and $M$ a finitely generated $R$-module of dimension $t$. Then, it is well known that $\ell_R(M/I^{n+1}M)$ agrees with a polynomial function of degree $t$ for $n\gg 0$. That is, there exist integers $\rme_I^0(M), \dots, \rme_I^{t}(M)$ such that
\[
\ell_R(M/I^{n+1}M)=\rme_I^0(M)\binom{n+t}{t}-\rme_I^1(M)\binom{n+t-1}{n-1}+ \cdots +(-1)^{t}\rme_I^{t}(M)
\]
for $n\gg0$. 
$\rme_I^0(M), \dots, \rme_I^{t}(M)$ are called the {\it Hilbert coefficients} of $M$ with respect to $I$. The leading coefficient $\rme_I^0(M)$ is called the {\it (Hilbert) multiplicity} of $M$ with respect to $I$. The Hilbert multiplicity/coefficients are deeply studied in connection with the structure of associated graded rings and that of $M$ (see for examples \cite{GNO, GNO2, K, No, O, S}). 

On the other hand, it is also known that there exist integers $\rmf_I^0(M), \dots, \rmf_I^{t-1}(M)$ such that
\begin{align} \label{eq00}
\begin{split} 
& \ell_R(I^{n+1}M:_M \fkm/I^{n+1}M) \\
= & \rmf_I^0(M)\binom{n+t-1}{t-1}-\rmf_I^1(M)\binom{n+t-2}{n-2}+ \cdots +(-1)^{t-1}\rmf_I^{t-1}(M)
\end{split}
\end{align}
for $n\gg 0$ (\cite[Theorem 4.1]{CQT}). Note that the function $\ell_R(I^{n+1}M:_M \fkm/I^{n+1}M)$ is useful to study the index of reducibility. Here, a submodule $N$ of $M$ is called an {\it irreducible submodule} if $N$ cannot be written as an intersection of two properly larger submodules of $M$.  The number of irreducible components of an irredundant irreducible decomposition of $N$, which is independent of the choice of the decomposition by Noether \cite{Noe}, is called the {\it index of reducibility} of $N$ and denoted by $\mathrm{ir}_M(N)$. The study of the index of reducibility has attracted the interest of a number of researchers \cite{AK, ADKN, CQT, EN, GSu, Nor, Q1, Q2, T1, T2}. By noting that $\mathrm{ir}_M(N) = \ell_R((N:_M \frak m)/N)$ holds for a submodule $N$ of $M$ with $\ell_R(M/N) < \infty$, the equation \eqref{eq00} claims that the function $\mathrm{ir}_M(I^{n+1}M)$ agrees with a polynomial function of degree $t-1$ for $n\gg 0$. 
We call $\rmf_I^0(M), \dots, \rmf_I^{t-1}(M)$ the {\it irreducibility coefficients} of $M$ with respect to $I$. The leading coefficient $\rmf_I^0(M)$ is called the {\it irreducible multiplicity} of $M$ with respect to $I$ (see \cite{T2}). 

In light of the above results, the notions of irreducibility function is useful, but the behavior of irreducibility function is more mysterious than that of the Hilbert function. Indeed, in contrast to the Hilbert multiplicity, the additive formula does not hold for the irreducible multiplicity because the socle is not additive. With this perspective, this paper deals with the relationship between the Hilbert multiplicity $\rme_I^0(M)$ and the irreducible multiplicity $\rmf_I^0(M)$. Our result of this paper can be stated as follows.

\begin{theorem}\label{mainthm1}
Let $(R, \m)$ be a Noetherian local ring, $I$ an $\m$-primary ideal, and $M$ a finitely generated $R$-module of dimension $t$. Then,
\[
\rmf_I^0(M) \le 
\begin{cases} 
\rme_I^0(M) & \text{if $t\ne 1$}\\
\rme_I^0(M) + \ell_R((0):_M \fkm) & \text{if $t=1$.}
\end{cases}
\]
Furthermore, we have the equality  if $\fkm =I^{n+1}M:_R I^n M$ for some $n\ge 0$.
\end{theorem}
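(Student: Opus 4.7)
The plan is to reduce to the case of positive depth and then prove a key colon inclusion. Let $N=H^0_{\m}(M)=\bigcup_{s\ge 0}(0:_M \m^s)$, a submodule of finite length, and set $\bar M=M/N$, so $\depth(\bar M)\ge 1$ unless $\bar M=0$. Since $I$ is $\m$-primary, the Artin--Rees lemma gives $I^{n+1}M\cap N=0$ for all $n\gg 0$; for such $n$, the short exact sequence $0\to N\to M/I^{n+1}M\to \bar M/I^{n+1}\bar M\to 0$, upon applying $\Hom_R(R/\m,-)$, produces
\[0\to \Soc(N)\to \Soc(M/I^{n+1}M)\to \Soc(\bar M/I^{n+1}\bar M).\]
Since $\Soc(N)=(0):_M\m$, this gives $\mathrm{ir}_M(I^{n+1}M)\le \ell((0):_M \m)+\mathrm{ir}_{\bar M}(I^{n+1}\bar M)$, and because $N$ has finite length it affects only lower-order Hilbert coefficients, so $\rme_I^0(M)=\rme_I^0(\bar M)$.

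The technical core is to establish, assuming $\depth(\bar M)\ge 1$, the inclusion
\[(I^{n+1}\bar M:_{\bar M} \m)\subseteq I^n\bar M\quad \text{for all } n\gg 0.\]
After a harmless faithfully flat extension ensuring an infinite residue field, choose an $I$-superficial element $x\in I$ for $\bar M$; since $\depth(\bar M)\ge 1$, such an $x$ is $\bar M$-regular. A Valabrega--Valla type stabilization then yields $(I^{n+1}\bar M:_{\bar M} x)=I^n\bar M$ for $n\gg 0$, and because $x\in \m$, every $y\in \bar M$ with $\m y\subseteq I^{n+1}\bar M$ satisfies $xy\in I^{n+1}\bar M$, hence $y\in I^n\bar M$. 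Therefore $\mathrm{ir}_{\bar M}(I^{n+1}\bar M)\le \ell(I^n\bar M/I^{n+1}\bar M)$, and by the finite difference of the Hilbert--Samuel polynomial this length equals $\rme_I^0(M)\binom{n+t-1}{t-1}+O(n^{t-2})$ for $n\gg 0$.

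Comparing leading coefficients yields the stated bound. When $t\ge 2$, the constant $\ell((0):_M\m)$ is a lower-order term relative to $\binom{n+t-1}{t-1}$, so $\rmf_I^0(M)\le \rme_I^0(M)$; when $t=1$ both sides are eventually constant and the additive term survives, giving $\rmf_I^0(M)\le \rme_I^0(M)+\ell((0):_M\m)$. For the equality statement, the hypothesis $\m=I^{n+1}M:_R I^n M$ gives $\m I^n M=I^{n+1}M$, and an easy induction propagates this to $\m I^k M=I^{k+1}M$ for all $k\ge n$, hence $I^k M\subseteq (I^{k+1}M:_M\m)$. Combined with $(0):_M\m\subseteq (I^{k+1}M:_M\m)$ and $I^k M\cap ((0):_M\m)=0$ for $k\gg 0$, this gives
\[\mathrm{ir}_M(I^{k+1}M)\ge \ell(I^k M/I^{k+1}M)+\ell((0):_M\m),\]
which together with the upper bound forces equality of the leading (or constant) coefficients.

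The main obstacle is the colon inclusion $(I^{n+1}\bar M:_{\bar M}\m)\subseteq I^n\bar M$: one must secure a superficial element that is simultaneously $\bar M$-regular, apply the Valabrega--Valla stabilization precisely, and verify compatibility of the Hilbert and irreducibility data with the residue-field extension. A secondary, but more delicate, point is ensuring in the equality step that $I^k M\cap ((0):_M\m)=0$ for $k\gg 0$, which also rests on Artin--Rees applied to the finite-length torsion submodule.
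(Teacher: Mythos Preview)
Your proposal is correct and the argument goes through, but it takes a genuinely different route from the paper's proof.

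The paper never passes to $\bar M=M/H^0_{\m}(M)$ and never invokes superficial elements. Instead it quotes a single Artin--Rees--type lemma for colon submodules (a module version of Schenzel's result): there exists $\ell$ with
\[
I^{n+1}M:_M\m \;=\; I^{n+1-\ell}\bigl(I^\ell M:_M\m\bigr)+(0):_M\m
\]
for all $n\ge \ell$. Combined with $((0):_M\m)\cap I^nM=0$ for large $n$, this yields immediately that $(I^{n+1}M:_M\m)/\bigl[(I^{n+1}M:_M\m)\cap I^nM\bigr]\cong (0):_M\m$, whence the single length inequality
\[
\ell\bigl((I^{n+1}M:_M\m)/I^{n+1}M\bigr)\le \ell(I^nM/I^{n+1}M)+\ell((0):_M\m),
\]
from which all cases follow by comparing coefficients. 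The equality clause is then a one-liner: the hypothesis forces $I^nM\subseteq I^{n+1}M:_M\m$, so the only inequality in the display becomes an equality.

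Your approach trades this colon-Artin--Rees lemma for the pair (depth reduction) $+$ (superficial element), and your equality argument is a two-sided squeeze rather than a direct equality. What you gain is that every ingredient is textbook multiplicity theory; what the paper gains is that it avoids the residue-field extension and works entirely inside $M$, with a shorter endgame for the equality case. One small omission in your sketch: the case $t=0$ (where $\bar M=0$) is not covered by the superficial-element argument and needs the trivial observation $\ell((0):_M\m)\le \ell(M)$, exactly as the paper does.
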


Note that the inequality $\rmf_I^0(M) \le \rme_I^0(M)$ does not directly follow from the inequality $\ell_R(I^{n+1}M:_M~\fkm/I^{n+1}M) \le \ell_R(M/I^{n+1}M)$ because the degrees of polynomials are different. Indeed, $\rmf_I^0(M) > \rme_I^0(M)$ happen when $t=1$ (see Example \ref{ex}). 
In addition, Theorem \ref{mainthm1} gives a characterization of Ulrich modules. Recall that an $R$-module $M$ is called an {\it Ulrich} $R$-module if $M$ is a Cohen-Macaulay $R$-module and $\mu_R(M)=\rme_\fkm^0(M)$, where $\mu_R(M)$ denotes the number of minimal generators of $M$ (\cite{BHU, GOTWY}). 
By recalling that an $R$-module $M$ is a Cohen-Macaulay $R$-module if and only if $\rme_{Q}^0(M)=\ell(M/Q M)$ for any (for some) parameter ideal $Q$ of $M$, the following is an analogue of the result in case of the irreducibility multiplicity.

\begin{corollary}\label{ulrich}
Let $M$ be a finitely generated $R$-module of dimension $t\ne 1$. Then the following are equivalent:
\begin{enumerate}[{\rm (a)}] 
\item $M$ is an Ulrich $R$-module;
\item $\rmf_Q^0(M) =\ell_R(M/QM)$ for some parameter ideal $Q$.
\end{enumerate} 
\end{corollary}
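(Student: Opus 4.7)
The plan is to deduce both implications from Theorem~\ref{mainthm1} combined with two classical facts: the inequality $\rme_Q^0(M) \le \ell_R(M/QM)$, valid for every parameter ideal $Q$ of $M$, with equality if and only if $M$ is Cohen--Macaulay; and the characterization that a Cohen--Macaulay module $M$ is Ulrich precisely when $\fkm M = QM$ for some (equivalently, any) minimal reduction $Q$ of $\fkm$.

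For (a)\,$\Rightarrow$\,(b), I would pick $Q$ to be a minimal reduction of $\fkm$ satisfying $\fkm M = QM$; such a $Q$ exists by the second classical fact, possibly after passage to a faithfully flat extension with infinite residue field. From $\fkm M = QM$ one obtains $\fkm = (QM :_R M)$ (since $M \ne 0$), and this matches the sufficient condition $\fkm = I^{n+1}M :_R I^n M$ of Theorem~\ref{mainthm1} with $I = Q$ and $n = 0$. As $t \ne 1$, the theorem yields $\rmf_Q^0(M) = \rme_Q^0(M)$, while the Cohen--Macaulay property gives $\rme_Q^0(M) = \ell_R(M/QM)$, so (b) follows.

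For (b)\,$\Rightarrow$\,(a), Theorem~\ref{mainthm1} together with the length inequality produces the chain
\[
\ell_R(M/QM) = \rmf_Q^0(M) \le \rme_Q^0(M) \le \ell_R(M/QM),
\]
forcing $\rme_Q^0(M) = \ell_R(M/QM)$ and hence $M$ Cohen--Macaulay. To upgrade to the Ulrich property, it suffices to show $\fkm M = QM$, since $\fkm M = QM$ gives $\fkm^n M = Q^n M$ for all $n \ge 1$, and hence $\rme_\fkm^0(M) = \rme_Q^0(M) = \ell_R(M/QM) = \mu_R(M)$. My plan is to establish the identity $\rmf_Q^0(M) = \ell_R(\Soc(M/QM))$ for Cohen--Macaulay $M$ of dimension $t \ne 1$ and parameter ideal $Q$: the $M$-regularity of a generating sequence of $Q$ gives the isomorphism $Q^nM/Q^{n+1}M \cong (M/QM)^{\binom{n+t-1}{t-1}}$, which yields the lower bound $\rmf_Q^0(M) \ge \ell_R(\Soc(M/QM))$; for the matching upper bound one argues that $\Soc(M/Q^{n+1}M)$ is concentrated in the top graded piece $Q^nM/Q^{n+1}M$. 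Together with the hypothesis $\rmf_Q^0(M) = \ell_R(M/QM)$, this identity forces $\ell_R(\Soc(M/QM)) = \ell_R(M/QM)$, i.e., $\Soc(M/QM) = M/QM$, whence $\fkm M \subseteq QM$ and finally $\fkm M = QM$.

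The main obstacle is establishing the concentration of $\Soc(M/Q^{n+1}M)$ in $Q^nM/Q^{n+1}M$ for Cohen--Macaulay $M$; this does not follow formally from the inclusion $\Soc(Q^nM/Q^{n+1}M) \subseteq \Soc(M/Q^{n+1}M)$, but should be derivable from a Koszul-complex argument applied to $x_1^{n+1},\dots,x_t^{n+1}$ for an $M$-regular sequence $x_1,\dots,x_t$ generating $Q$, or by induction on $n$ using the short exact sequence $0 \to Q^nM/Q^{n+1}M \to M/Q^{n+1}M \to M/Q^nM \to 0$ together with a snake-lemma analysis of socles.
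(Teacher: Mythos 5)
Your proposal is correct, and its overall skeleton coincides with the paper's: (a)\,$\Rightarrow$\,(b) via a minimal reduction $Q$ with $\fkm M=QM$ and the equality case of Theorem~\ref{mainthm1}, and (b)\,$\Rightarrow$\,(a) via the chain $\ell_R(M/QM)=\rmf_Q^0(M)\le \rme_Q^0(M)\le \ell_R(M/QM)$ forcing $M$ to be Cohen--Macaulay and then forcing $\fkm M=QM$. The one place you diverge is the key identity $\rmf_Q^0(M)=\ell_R(\Soc(M/QM))$ for Cohen--Macaulay $M$: the paper simply cites \cite[Theorem 5.2]{CQT} for $\rmf_Q^0(M)=\ell_R((QM:_M\fkm)/QM)$, whereas you propose to derive it from scratch. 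Your derivation does close: the ``obstacle'' you flag --- that $\Soc(M/Q^{n+1}M)$ is concentrated in $Q^nM/Q^{n+1}M$ --- is the standard fact that $Q^{n+1}M:_M Q=Q^nM$ when $Q$ is generated by an $M$-regular sequence, which follows from the isomorphism $\gr_Q(M)\cong (M/QM)[X_1,\dots,X_t]$ (an element $u\notin Q^nM$ has a nonzero initial form of degree $j\le n-1$, and injectivity of multiplication by $X_i$ on this polynomial module shows $x_iu\notin Q^{j+2}M\supseteq Q^{n+1}M$). With that, $\mathrm{ir}_M(Q^{n+1}M)=\binom{n+t-1}{t-1}\ell_R(\Soc(M/QM))$ exactly, giving both bounds at once. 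What your route buys is self-containedness (no appeal to \cite{CQT}) at the cost of a page of standard associated-graded bookkeeping; what the paper's route buys is brevity. One further shared caveat: both you and the paper reduce to an infinite residue field for the existence of a $Q$ with $\fkm M=QM$, even though (b) asserts existence of a parameter ideal over $R$ itself; this is a minor point the paper also glosses over, so it is not a defect of your argument relative to theirs.
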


\section{The proof of main result}
In what follows, let $(R, \m)$ be a Noetherian local ring, $I$ an ideal, and $M$ a finitely generated $R$-module of dimension $t$. 
First, we note a lemma to prove Theorem \ref{mainthm1}. This result was proved by \cite[Proposition 2.1]{Schenzel} for rings, but it is easy to extend to modules. 

\begin{lemma}\label{l1}  
Let $I, J$ be ideals of $R$. Then there exists a positive integer $k$ such that 
\[
I^{n+k}M:_{M}J=I^{n}(I^{k}M:_{M}J)+(0):_{M}J
\] 
for all $n\ge 1$.
\end{lemma}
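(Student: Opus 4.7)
The plan is to reduce this to the ordinary Artin--Rees lemma by encoding the colon operation as a linear map into a free module of the appropriate rank. This is exactly how Schenzel's ring-theoretic statement is proved, and the extension to modules is formal.

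First I would fix generators $J=(j_1,\dots,j_s)$ and define the $R$-linear map
\[
\phi\: M \longrightarrow M^{s}, \qquad m \longmapsto (j_1 m,\dots, j_s m).
\]
Two immediate observations: $\ker \phi = (0):_{M} J$, and for any $n\ge 0$ one has $\phi(m) \in I^{n}M^{s}$ if and only if $j_i m \in I^{n}M$ for every $i$, i.e., if and only if $m \in I^{n}M :_{M} J$.

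Next I would apply the Artin--Rees lemma to the finitely generated $R$-module $M^{s}$ and its submodule $\phi(M)$: there exists $k\ge 1$ such that
\[
I^{n+k}M^{s} \cap \phi(M) \;=\; I^{n}\bigl(I^{k}M^{s} \cap \phi(M)\bigr) \quad \text{for all } n\ge 1.
\]
The inclusion $\supseteq$ in the statement is routine: elements of $(0):_{M}J$ lie trivially in $I^{n+k}M:_{M}J$, and if $m'\in I^{k}M:_{M}J$ and $a\in I^{n}$ then $J(am) = aJm \subseteq a I^{k}M \subseteq I^{n+k}M$.

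For the non-trivial inclusion $\subseteq$, take $m \in I^{n+k}M :_{M} J$. Then $\phi(m) \in I^{n+k}M^{s}\cap \phi(M)$, so by the Artin--Rees equality we may write
\[
\phi(m) \;=\; \sum_{l} a_l\, \phi(m_l), \qquad a_l \in I^{n},\ \phi(m_l)\in I^{k}M^{s}\cap \phi(M).
\]
The condition $\phi(m_l)\in I^{k}M^{s}$ translates to $m_l \in I^{k}M :_{M} J$, so $\sum_l a_l m_l \in I^{n}(I^{k}M :_{M} J)$. Finally, $\phi\!\left(m - \sum_l a_l m_l\right) = 0$ shows $m - \sum_l a_l m_l \in \ker\phi = (0):_{M} J$, and therefore $m \in I^{n}(I^{k}M :_{M} J) + (0):_{M} J$, as required.

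There is no real obstacle here; the only point requiring care is to apply Artin--Rees to the pair $\phi(M)\subseteq M^{s}$ rather than directly to $M$, so that the colon condition $Jm\subseteq I^{n+k}M$ becomes a membership condition in a submodule to which the classical lemma applies. Once the map $\phi$ is in place the argument is mechanical.
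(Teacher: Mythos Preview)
Your proof is correct and is precisely the extension of Schenzel's argument that the paper invokes; the published text gives no proof of its own beyond the citation and the remark that passing from rings to modules is routine, so in that sense you have supplied exactly what the paper omits.

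For what it is worth, the LaTeX source contains a commented-out draft proof that takes a different route: it first treats the principal case $J=(b)$ by applying Artin--Rees to $bM\subseteq M$, and then for $J=(b_1,\dots,b_s)$ tries to intersect the principal results over the generators. That draft founders on the step
\[
\bigcap_{i=1}^{s}\bigl(I^{n}(I^{k}M:_{M}b_i)+(0:_{M}b_i)\bigr)\ \subseteq\ \bigcap_{i=1}^{s} I^{n}(I^{k}M:_{M}b_i)\ +\ \bigcap_{i=1}^{s}(0:_{M}b_i),
\]
which is an instance of the generally false inclusion $\bigcap_i(A_i+B_i)\subseteq(\bigcap_i A_i)+(\bigcap_i B_i)$. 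Your device of packaging all the generators into a single map $\phi:M\to M^{s}$ and applying Artin--Rees once to $\phi(M)\subseteq M^{s}$ is exactly what is needed to sidestep this, and is presumably why the authors abandoned the draft in favor of the reference.
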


Now we prove Theorem \ref{mainthm1}. Let $I$ be an $\m$-primary ideal of $R$.

\begin{proof}[Proof of Theorem \ref{mainthm1}] By Lemma \ref{l1} and the hypothesis that $I$ is  $\m$-primary, we can choose an integer $\ell>0$ such that
\[
I^{n+1}M:_M \fkm=I^{n+1-\ell}(I^\ell M:_M \fkm) + (0):_M \fkm \quad \text{and} \quad ((0):_M \fkm) \cap I^n M=0
\] 
for all $n\ge \ell$. Hence, 
\begin{align*} 
(I^{n+1}M:_M \fkm) \cap I^n M =& [I^{n+1-\ell}(I^\ell M:_M \fkm) + (0):_M \fkm] \cap I^n M\\
=&I^{n+1-\ell}(I^\ell M:_M \fkm) + ((0):_M \fkm) \cap I^n M\\
=&I^{n+1-\ell}(I^\ell M:_M \fkm) 
\end{align*}
 for all $n\ge \ell$. It follows that
 \begin{align*}
 &(I^{n+1}M:_M  \fkm)/[(I^{n+1}M:_M \fkm)  \cap I^n M] \\
 \cong&  [I^{n+1-\ell}(I^\ell M:_M \fkm) + (0):_M \fkm]/ I^{n+1-\ell}(I^\ell M:_M \fkm)\\
\cong & (0):_M \fkm.
\end{align*}
Therefore, we obtain that
{\small
 \begin{align}\label{eq1}
 \begin{split} 
& \ell_R((I^{n+1}M:_M  \fkm)/I^{n+1} M)\\
=& \ell_R([(I^{n+1}M:_M \fkm)\cap I^n M]/I^{n+1}M) + \ell_R((I^{n+1}M:_M  \fkm)/[(I^{n+1}M:_M \fkm)  \cap I^n M])\\
 =&\ell_R([(I^{n+1}M:_M \fkm)\cap I^n M]/I^{n+1}M) + \ell_R((0):_M \fkm)\\
 = & \ell_R (I^n M/I^{n+1}M) + \ell_R((0):_M \fkm) - \ell_R(I^nM/[(I^{n+1}M:_M \fkm)\cap I^n M])\\
 \le & \ell_R (I^n M/I^{n+1}M) + \ell_R((0):_M \fkm).
 \end{split}
 \end{align}
 }
 Since $\ell_R((0):_M \fkm)$ is constant, by comparing the leading coefficients, we get that $\rmf_I^0(M)\le \rme_I^0(M)$ if $t\ge 2$. If $t=1$, then we have $\rmf_I^0(M) \le \rme_I^0(M) + \ell_R((0):_M \fkm)$. For the case where $t=0$, we immediately get the inequality $$\rmf_I^0(M) =\ell_R ((0):_M \fkm)\le \ell_R(M)=\rme_I^0(M).$$
 
Suppose that $\fkm =I^{n+1}M:_R I^n M$ for some (all) $n\gg 0$. Then, $I^n M\subseteq I^{n+1}M:_M~\fkm$. This follows that the inequality in \eqref{eq1} becomes an equality.
\end{proof}

\begin{corollary}  
Let $(R, \m)$ be a Noetherian local ring, and $M$ a finitely generated $R$-module of dimension $t$. Then, 
\[
\rmf_\fkm^0(M) =
\begin{cases} 
\rme_\fkm^0(M) & \text{if $t\ne 1$}\\
\rme_\fkm^0(M) + \ell_R((0):_M \fkm) & \text{if $t=1$.}
\end{cases}
\]
\end{corollary}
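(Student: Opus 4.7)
The plan is to deduce the corollary as an immediate consequence of Theorem~\ref{mainthm1} applied to the maximal ideal $I=\fkm$. That theorem already delivers exactly the two inequalities asserted in the corollary (with $\ell_R((0):_M \fkm)$ as a correction in the case $t=1$), and its ``furthermore'' clause promises equality provided $\fkm = I^{n+1}M :_R I^n M$ for some $n \ge 0$. Specializing $I = \fkm$, the whole proof reduces to verifying that $\fkm = \fkm^{n+1}M :_R \fkm^n M$ for some $n \ge 0$.

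First I would record the trivial inclusion $\fkm \subseteq \fkm^{n+1}M :_R \fkm^n M$. Since $\fkm$ is maximal, the colon ideal must equal either $\fkm$ or $R$. The alternative $\fkm^{n+1}M :_R \fkm^n M = R$ amounts to $\fkm^n M \subseteq \fkm^{n+1}M$, which combined with the automatic reverse containment gives $\fkm^n M = \fkm^{n+1}M$; Nakayama's lemma then forces $\fkm^n M = 0$. Since $\dim M = t \ge 1$, the module $M$ cannot be annihilated by any power of $\fkm$, so this possibility does not occur. Consequently $\fkm^{n+1}M :_R \fkm^n M = \fkm$ for every $n \ge 0$, and applying Theorem~\ref{mainthm1} yields the asserted equalities.

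The argument contains no serious obstacle; it is essentially a one-line verification of the hypothesis of the ``furthermore'' clause, and the only input beyond Theorem~\ref{mainthm1} is the Nakayama step. The only subtlety worth flagging is the implicit assumption $t \ge 1$: we read the corollary in this range, since the leading irreducibility coefficient $\rmf_\fkm^0$ is naturally defined only then, by the degree count in the polynomial of equation~\eqref{eq00}.
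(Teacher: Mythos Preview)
Your proposal is correct and follows essentially the same approach as the paper: apply Theorem~\ref{mainthm1} with $I=\fkm$ and verify the colon condition $\fkm=\fkm^{n+1}M:_R\fkm^nM$. The paper simply asserts this identity for all $n>0$, whereas you supply the Nakayama justification and explicitly restrict to $t\ge 1$; both are reasonable refinements, and the caveat about $t=0$ is appropriate since in that case $\fkm^nM=0$ for $n\gg0$ and the colon equals $R$.
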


\begin{proof}
Note that $\fkm$ satisfies the condition $\fkm =\fkm^{n+1}M:_R~\fkm^n M$ for all $n>0$. Hence, the assertion follows from Theorem \ref{mainthm1}.
\end{proof}

\begin{ex}\label{ex}
Let 
\[
R=K[[x_1, \dots , x_d, y_1, \dots, y_\ell]]/[(x_1, \dots , x_d)(y_1, \dots, y_\ell) + (y_1, \dots, y_\ell)^2],
\] 
where $K[[x_1, \dots , x_d, y_1, \dots, y_\ell]]$ denotes the formal power series ring over a field $K$. Let $\fkm$ denote the maximal ideal of $R$. Then, 
\begin{align*}
\ell_R(R/\fkm^{n+1}) &= \binom{n+d}{d} + \ell \text{\quad and}\\
\ell_R((\fkm^{n+1}:_R \fkm)/\fkm^{n+1}) &= \binom{n+d-1}{d-1} +\ell.
\end{align*}
Hence, $\rme_\fkm^0(R)=\rmf_\fkm^0(R)=1$ if $t\ne 1$, but $\rme_\fkm^0(R)=1 < \rmf_\fkm^0(R)=1+\ell$ if $t=1$.
\end{ex}

\begin{proof}
Let $\underline{X}$ and $\underline{Y}$ denote the sequences $x_1, \dots , x_d$ and $y_1, \dots, y_\ell$, respectively. By noting that 
\[
R/\fkm^{n+1}\cong K[[\underline{X}, \underline{Y}]]/[(\underline{X})^{n+1} + (\underline{X})(\underline{Y}) + (\underline{Y})^2],
\] 
$R/\fkm^{n+1}$ is spanned by the basis of $K[[\underline{X}]]/(\underline{X})^{n+1}$ and $\underline{Y}$ as a $K$-vector space. On the other hand, $(\fkm^{n+1}:_R \fkm)/\fkm^{n+1}$ is spanned by the socle of $K[[\underline{X}]]/(\underline{X})^{n+1}$ and $\underline{Y}$ as a $K$-vector space; hence, the assertion holds true.
\end{proof}

\begin{proof}[Proof of Corollary \ref{ulrich}]
(a) $\Rightarrow$ (b): We may assume that $R/\fkm$ is infinite. Then we have $\rmf_Q^0(M)=\rme_Q^0(M)$ because $\fkm M=QM$ for some parameter ideal $Q$. Furthermore, since $M$ is a Cohen-Macaulay $R$-module, we get $\rme_Q^0(M)=\ell_R(M/QM)$.

(b) $\Rightarrow$ (a): We may also assume that $R/\fkm$ is infinite. If $t\ne 1$, then $\rmf_Q^0(M) \le \rme_Q^0(M) \le \ell_R(M/QM)$. The equality $\rme_Q^0(M) =\ell_R(M/QM)$ forces $M$ to be a Cohen-Macaulay $R$-module. We then obtain that $\rmf_Q^0(M) =\ell_R ((QM:_M\fkm)/QM)$ by \cite[Theorem 5.2]{CQT}. It follows that $QM:_M \fkm =M$; hence, $\fkm M =QM$ holds as desired.
\end{proof}

The assertion of Corollary \ref{ulrich} does not hold if $t=1$.

\begin{ex}
Let $R=K[[x]]$ be a formal power series ring over a field $K$, and let $M=R\oplus K$. Then, 
\begin{align*}
\ell_R(M/xM)=\ell_R(R/xR) + \ell_R(K/xK)=\ell_R(K) + \ell_R(K) = 2.
\end{align*}
On the other hand, since $M/x^{n+1}M \cong R/x^{n+1}R \oplus K$, we have 
\[
\rmf_{(x)}^0(M)=\ell_R((x^{n+1}M:_M\fkm)/x^{n+1}M)=2.
\] 
Hence, $\rmf_{(x)}^0(M) = \ell_R(M/xM) =2$, but $M$ is not an Ulrich $R$-module.
\end{ex}

\begin{thebibliography}{99}

\bibitem{AK} R. Abdolmaleki and S. Kumashiro, \textit{Certain monomial ideals whose numbers of generators of powers descend}, Archiv der Mathematik, {\bf 116} (2021), 637--645.

\bibitem{ADKN} T. N. An, T.D. Dung, S. Kumashiro and L.T. Nhan, {\it Reducibility index and sum-reducibility index}, Journal of Algebra and Its Applications, (to appear).

\bibitem{BHU}  J. P. Brennan, J. Herzog, and B. Ulrich, {\it Maximally generated maximal Cohen-Macaulay modules}, Math. Scand., {\bf 61} (1987), 181--203.



\bibitem{CQT} N. T. Cuong, P. H. Quy and H. L. Truong, {\it On the index of reducibility in Noetherian modules}, J. Pure Appl. Algebra, {\bf  219} (2015), 4510--4520.

\bibitem {EN} S. Endo and M. Narita, {\it The number of irreducible components of an ideal and the semi-regularity of a local ring}, Proc. Japan. Acad.,  {\bf 40} (1964), 627--630.

\bibitem{GOTWY} S. Goto, K. Ozeki, R. Takahashi, K.-i. Watanabe, K.-i. Yoshida, {\it Ulrich ideals and modules}, Math. Proc. Camblidge Philos. Soc., {\bf 156} (2014), no.1, 137--166.

\bibitem{GNO} S. Goto, K. Nishida, K. Ozeki, {\it Sally modules of rank one}, {\em Michigan Mathematical Journal}, {\bf 57} (2008), 359--381.

\bibitem{GNO2} S. Goto, K. Nishida, K. Ozeki, {\it The structure of Sally modules of rank one}, {\em Mathematical Research Letters}, {\bf 15} (2008), no.5, 881--892.

\bibitem {GSu} S. Goto and N. Suzuki, {\it Index of reducibility of parameter ideals in a local ring}, J.  Algebra, {\bf  87} (1984), 53--88.

\bibitem{K} S. Kumashiro, Ideals of reduction number two, {\em Israel Journal of Mathematics}, {\bf  243} (2021), 45--61.


\bibitem {Noe} E. Noether, {\it Idealtheorie in Ringbereichen,} Math. Ann.,  {\bf 83} (1921), 24--66.

\bibitem {Nor} D. G.  Northcott, {\it On irreducible ideals in local rings}, J. London Math. Soc, {\bf 32} (1957), 82--88.

\bibitem{No} D. G. Northcott, {\it A note on the coefficients of the abstract Hilbert function}, {\em Journal of the London Mathematical Society}, {\bf 35}  (1960), 209--214.

\bibitem{O} A. Ooishi, {\it $\Delta$-genera and sectional genena of commutative rings}, {\em Hiroshima Mathematical Journal}, {\bf 17}  (1987), 361--372.

\bibitem {Q1} P. H. Quy, {\it Asymptotic behaviour of good systems of parameters of sequentially generalized Cohen-Macaulay modules}, Kodai Math. J., {\bf 35}  (2012), 576--588.

\bibitem {Q2} P. H. Quy, {\it On the uniform bound of the index of reducibility of parameter ideals of a module whose polynomial type is at most one}, Arch. Math., {\bf  101} (2013), 469--478.

\bibitem{S}  J. Sally, {\it Hilbert coefficients and reduction number $2$}, {\em Journal of Algebraic Geometry}, {\bf 1}, no. 2 (1992), 325--333.
\bibitem{Schenzel} P. Schenzel, On the use of local cohomology in algebra and geometry. In: Elias, J. (ed.) et al., Six lectures on commutative algebra. Basel (1998), 241--292.
\bibitem {T1} H. L. Truong, {\it Index of reducibility of distinguished parameter ideals and sequentially Cohen-Macaulay modules}, Proc. Amer. Math. Soc., {\bf 141}  (2013), 1971--1978.

\bibitem {T2} H. L. Truong, {\it Chern coefficients and Cohen-Macaulay rings}, J.  Algebra, {\bf  490} (2017), 316--329.
\end{thebibliography}
\end{document}